\numberwithin{equation}{section}
\numberwithin{figure}{section}
\theoremstyle{plain}
\newtheorem{thm}{\protect\theoremname}
  \theoremstyle{definition}
  \newtheorem{defn}[thm]{\protect\definitionname}
  \theoremstyle{plain}
  \newtheorem{lem}[thm]{\protect\lemmaname}
  \theoremstyle{plain}
  \newtheorem{prop}[thm]{\protect\propositionname}
\DeclareMathOperator{\slicerank}{slice-rank}
  \providecommand{\definitionname}{Definition}
  \providecommand{\lemmaname}{Lemma}
  \providecommand{\propositionname}{Proposition}
\providecommand{\theoremname}{Theorem}
\providecommand{\MR}{\relax\ifhmode\unskip\space\fi MR }
\newcommand\arXiv[1]{arXiv:\href{http://arXiv.org/abs/#1}{#1}}
\begin{document}

\title{Monochromatic Equilateral Triangles in the Unit Distance Graph}

\author{Eric Naslund}
\date{\today}
\begin{abstract}
Let $\chi_{\Delta}(\mathbb{R}^{n})$ denote the minimum number of
colors needed to color $\mathbb{R}^{n}$ so that there will not be
a monochromatic equilateral triangle with side length $1$. Using
the slice rank method, we reprove a result of Frankl and Rodl, and
show that $\chi_{\Delta}\left(\mathbb{R}^{n}\right)$ grows exponentially
with $n$. This technique substantially improves upon the best known
quantitative lower bounds for $\chi_{\Delta}\left(\mathbb{R}^{n}\right)$,
and we obtain 
\[
\chi_{\Delta}\left(\mathbb{R}^{n}\right)>(1.01446+o(1))^{n}.
\]
\end{abstract}

\maketitle

\section{Introduction}

The chromatic number of Euclidean space, $\chi\left(\mathbb{R}^{n}\right)$,
is the the minimum number of colors required to color $\mathbb{R}^{n}$
such that no two points at distance $1$ have the same color. When
$n=2$ determining $\chi\left(\mathbb{R}^{2}\right)$ is known as
the Hadwiger-Nelson problem \cite{Gardner1960BookofProblems,Hadwiger1944,Hadwiger1945},
and the best existing bounds are 
\[
5\leq\chi\left(\mathbb{R}^{2}\right)\leq7,
\]
where the lower bound is a recent improvement due to De Grey \cite{DeGrey2018ChromaticNumberOfThePlane}.
For large $n$, Larman and Rogers \cite{LarmanRogers1972DistancesWithinEuclideanSpaces}
proved that 
\[
\chi\left(\mathbb{R}^{n}\right)\leq\left(3+o(1)\right)^{n},
\]
and conjectured that the rate of growth of this function is exponential.
This was confirmed by Frankl and Wilson in 1981, who applied the linear
algebraic method to prove that $\chi\left(\mathbb{R}\right)^{n}\geq\left(1+c+o(1)\right)^{n}$
where $c=\frac{\sqrt{2}-1}{2}=0.207\dots$. The best existing lower
bound is due to Raigorodskii \cite{Raigorodskii2000OnTheChromaticNumberOfASpace}
\[
\chi\left(\mathbb{R}^{n}\right)>(1.239\dots+o(1))^{n},
\]
and is proven using a refined application of the linear algebraic
method.

In Euclidean Ramsey theory, a configuration of points $S\subset\mathbb{R}^{k}$
is said to be \emph{exponentially Ramsey }if we need exponentially
many colors as a function of $n$ to guarantee that there will be
no monochromatic copies of $S$ in any coloring of $\mathbb{R}^{n}$.
In 1987 Frankl and Rodl proved that the $k$-simplex is exponentially
Ramsey for every $k$ \cite[Theorem 1.18]{FranklRodl1987FranklRodlTheorem}.
Specifically, they proved that for any $k$, there exists $\epsilon_{k}>0$
such that any coloring of $\mathbb{R}^{n}$ with less than $(1+\epsilon_{k}+o(1))^{n}$
colors contains a monochromatic regular $k$-simplex with side length
$1$. In this paper we will examine the specific case $k=3$, and
we let $\chi_{\Delta}\left(\mathbb{R}^{n}\right)$ denote the minimum
number of colors needed to color $\mathbb{R}^{n}$ so that it does
not contain a monochromatic equilateral triangle of side lengths $1$.
The best existing lower bound is due to Sagdeev \cite{Sagdeev2018AnImprovedFranklRodlEquilTriangle},
who refined Frankl and Rodl's theorem on intersections of families
of sets to prove that
\[
\chi_{\Delta}\left(\mathbb{R}^{n}\right)>(1+c+o(1))^{n}
\]
where 
\[
c=0.00085.
\]
In this paper, we use the Slice-Rank method introduced in \cite{TaosBlogCapsets}
to give the following quantitative improvement to Sagdeev's result:
\begin{thm}
\label{thm:monochromatic_triangles}We have that 
\[
\chi_{\Delta}\left(\mathbb{R}^{n}\right)>(1+c+o(1))^{n}
\]
where 
\[
c=0.01446\dots 
\]
\end{thm}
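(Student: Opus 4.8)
The standard route to an exponential lower bound on $\chi_{\Delta}(\mathbb{R}^n)$ is to reduce the geometric problem to a combinatorial one about set systems: find a configuration of points in $\{0,1\}^N$ (scaled appropriately into $\mathbb{R}^N$) realizing an equilateral triangle of side length $1$, where the three vertices correspond to sets $A, B, C$ with prescribed pairwise symmetric-difference sizes. If we take $N$-element ground set and pick sets of size $k$ with $|A \triangle B| = |B \triangle C| = |A \triangle C| = 2d$ for suitable $d$, the three points are mutually at Euclidean distance $\sqrt{2d}$, hence after scaling by $1/\sqrt{2d}$ they form a unit equilateral triangle. So a coloring of $\mathbb{R}^n$ with few colors and no monochromatic unit equilateral triangle, restricted to these points, yields a coloring of the $k$-subsets of $[N]$ with no monochromatic triple $(A,B,C)$ of the prescribed intersection pattern. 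The chromatic number of that Cayley-type hypergraph is at least $\binom{N}{k}$ divided by the size of the largest "triangle-free" family, so the whole game becomes: upper-bound the size of a family $\mathcal{F} \subseteq \binom{[N]}{k}$ containing no three sets $A,B,C$ with all pairwise intersections equal to $k-d$. Then tune $N, k, d$ (all proportional, say $k = \beta N$, $d = \delta N$) to maximize the resulting exponential base, and take $n \to \infty$ by embedding these finite configurations.

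**The slice-rank input.**

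The heart of the argument is bounding such a family via the slice-rank / tensor-power method. Following the Croot–Lev–Pach / Tao framework, one builds a three-variable function $T(x,y,z)$ on $\mathcal{F}^3$ (or on a suitable ambient product) that is nonzero exactly when $x=y=z$ and vanishes whenever $(x,y,z)$ is a genuine triple with the forbidden intersection pattern — typically $T$ is a product/sum of indicator-type polynomials encoding the constraints $|x \cap y| = k-d$, etc., perhaps after passing to $\mathbb{F}_p^N$ coordinates or to characters. The diagonal tensor $\sum_{x \in \mathcal{F}} \delta_{x=y=z}$ has slice rank exactly $|\mathcal{F}|$, so if $T$ agrees with it on the diagonal and is supported off the forbidden configuration, then $|\mathcal{F}| \le \slicerank(T)$, and $\slicerank(T)$ is bounded by the dimension of an appropriate space of functions — concretely the number of monomials of bounded degree, or the number of $0/1$ vectors of weight in a restricted range, giving a bound of the form $2^{(H(\cdot) + o(1))N}$ for an entropy expression. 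One must be careful that the "degenerate" triples with, say, $A = B \ne C$ but still satisfying the intersection equations are handled — either they are automatically excluded by the size constraint $|A|=|B|=|C|=k$ and $d \ne 0$, or one removes a lower-order set of bad sets first.

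**Optimization and the main obstacle.**

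With the slice-rank bound $|\mathcal{F}| \le 2^{(f(\beta,\delta)+o(1))N}$ in hand, the chromatic lower bound is $\binom{N}{\beta N} / |\mathcal{F}| \ge 2^{(H(\beta) - f(\beta,\delta) + o(1))N}$, and since the configuration lives in $\mathbb{R}^N$ we get $\chi_{\Delta}(\mathbb{R}^n) \ge (2^{\,c'} + o(1))^n$ where $c' = \max_{\beta,\delta} (H(\beta) - f(\beta,\delta))$ over the feasible region (the constraints being that $2d \le 2k$, that the triangle is nondegenerate, and whatever divisibility/positivity conditions the slice-rank construction imposes). The claimed constant $c = 0.01446\dots$ should come out as $2^{c'} - 1$ for the optimal choice. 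The main obstacle I expect is twofold: first, getting a slice-rank bound that is actually sharp enough — a naive polynomial-degree bound will typically be too weak, and one likely needs the multi-colored / Croot–Lev–Pach refinement where one bounds the rank of a matrix whose rows and columns are indexed by low-weight vectors, extracting the tight entropy exponent; and second, carrying out the numerical optimization over $(\beta,\delta)$ cleanly, including verifying that the optimizing parameters lie strictly inside the feasible region so that the $o(1)$ terms and the rational-approximation of $\beta, \delta$ cause no trouble in the limit $n \to \infty$. Checking that the degenerate-triple contribution is genuinely negligible — i.e. that one can delete it while changing $|\mathcal{F}|$ by only a subexponential factor — is the kind of detail that is routine in principle but must be done with care for the final constant to be trustworthy.
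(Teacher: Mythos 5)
Your high-level frame (restrict to a Hamming slice, bound triangle-free families by slice rank, divide $\binom{n}{k}$ by that bound, rescale to side length $1$, optimize parameters) is indeed the paper's strategy, but the proposal is missing the one idea that makes the argument work and that determines the constant: an explicit \emph{low-degree} tensor that is diagonal on a triangle-free family. The paper takes $p$ to be the least odd prime exceeding $k/4$ (Baker--Harman--Pintz gives $p = k/4 + O(n^{0.525})$, which is where the $o(1)$ comes from) and sets, over $\mathbb{F}_p$,
\[
H(x,y,z)=\Bigl(\prod_{i=1}^{n}(x_i+y_i+z_i-1)\Bigr)\Bigl(1-\bigl(\tfrac{1}{2}\|x-y\|_2^2\bigr)^{p-1}\Bigr).
\]
The three-variable product does double duty: if it is nonzero then no coordinate sum equals $1$, which forces all three pairwise distances to be equal (so degenerate triples $x=y\neq z$ are killed automatically, since equal distances would force $z=x$) \emph{and} forces $\tfrac12\|x-y\|_2^2\leq \tfrac k2<2p$, so the single two-variable Fermat factor then pins the side length to exactly $\sqrt{2p}$. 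Crucially this costs total degree only $n+2(p-1)\approx n+\tfrac k2$, and the "slice off the lowest-degree third" count of multilinear monomials gives the bound $3\min_t (1+t)^n t^{-(n/3+k/6+\epsilon_0)}$, which after the $\max_k\binom{n}{k}t^{k/6}\approx(1+t^{1/6})^n$ step yields $c=\max_{0<t<1}\,t^{1/3}(1+t^{1/6})/(1+t)-1=0.01446\dots$.

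Your sketched construction --- a product of "indicator-type polynomials encoding $|x\cap y|=k-d$, etc." for the pairs --- does not produce a diagonal tensor, and the fixes you offer do not repair it. If $x=y\neq z$ with $\tfrac12\|x-z\|_2^2=p$, every pairwise indicator is nonzero, so the tensor is nonzero off the diagonal; this is not excluded by the size constraint, and deleting sets to remove such pairs amounts to forbidding a pair distance, i.e.\ the Frankl--Wilson problem rather than the triangle problem. Likewise, with $p\approx k/4$ the half-squared distances can be $p$, $2p$, or $3p$, so three pairwise Fermat factors alone would also accept non-equilateral triples, and even if patched they would cost degree roughly $3\cdot 2(p-1)$ instead of $2(p-1)$, degrading the exponent well below the claimed $0.01446$. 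Finally, the proposal contains no actual optimization: your guess that a "naive polynomial-degree bound will typically be too weak" is the opposite of what happens here --- the naive degree count is exactly what is used, but only because the tensor above has unusually small degree --- and the claimed constant is never derived. As it stands the proposal is a plausible plan, not a proof, and its only concrete construction would fail at the diagonality step.
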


\section{The Slice Rank}

In a breakthrough result, Croot, Lev, and Pach \cite{CrootLevPachZ4},
introduced a powerful new way to apply the polynomial method, and
Ellenberg and Gijswijt \cite{EllenbergGijswijtCapsets} used their
technique to prove that the largest capset\footnote{A \emph{cap set }is a subset of $\mathbb{F}_{3}^{n}$ whose only solutions
to $x+y+z=0$ are the trivial solutions $x=y=z$. } in $\mathbb{F}_{3}^{n}$ has size bounded by $O\left(2.756^{n}\right)$,
settling a longstanding conjecture. Tao symmetrized their argument
in his blog \cite{TaosBlogCapsets}, and introduced the notion of
the slice rank of a tensor:
\begin{defn}
Let $X,Y,Z$, be finite sets, and let $\mathbb{F}$ be a field. The
slice rank of a function 
\[
F\colon X\times Y\times Z\rightarrow\mathbb{F}
\]
is the smallest $k$ such that 
\[
F(x,y,z)=\sum_{i=1}^{a}f_{i}(x,y)g(z)+\sum_{i=a+1}^{b}f_{i}(x,z)g(y)+\sum_{i=b+1}^{k}f_{i}(y,z)g(x).
\]
\end{defn}
The slice rank method has seen a wide array of applications \cite{BlasiakChurchCohnGrochowNaslundSawinUmans2016MatrixMultiplication,BurgisserGargOliveiraWalterWigderson2017NullConeSliceRankTheorem,GeShangguan2017NoRightAngles,Naslund2017EGZ,Naslund2017Partition,NaslundSawinSunflower,Sawin2017NonAbelianMatchings},
and we refer the reader to section 4 of \cite{BlasiakChurchCohnGrochowNaslundSawinUmans2016MatrixMultiplication}
for an in depth discussion of the properties of the slice rank. For
our purposes, we will need the critical lemma, which was proven by
Tao:
\begin{lem}
\label{lem:critical_lemma}Let $X$ be a finite set, and let $X^{n}$
denote the $n$-fold Cartesian product of $X$ with itself. Suppose
that 
\[
F\colon X^{n}\rightarrow\mathbb{F}
\]
is a diagonal tensor, that is 
\[
F(x_{1},\dots,x_{n})=\sum_{a\in A}c_{a}\delta_{a}(x_{1})\cdots\delta_{a}(x_{n})
\]
for some $A\subset X$, $c_{a}\neq0$, where 
\[
\delta_{a}(x)=\begin{cases}
1 & x=a\\
0 & \text{otherwise}
\end{cases}.
\]
Then 
\[
\slicerank(F)=|A|.
\]
\end{lem}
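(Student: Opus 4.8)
The plan is to prove $\slicerank(F)\le|A|$ and $\slicerank(F)\ge|A|$ separately; throughout, a \emph{slice} means a function equal to a function of one coordinate times a function of the remaining coordinates. The upper bound is immediate, since each term $c_a\delta_a(x_1)\cdots\delta_a(x_n)$ is the slice $\bigl(c_a\delta_a(x_1)\bigr)\cdot\bigl(\delta_a(x_2)\cdots\delta_a(x_n)\bigr)$, exhibiting $F$ as a sum of $|A|$ slices. The real content is the lower bound, which I would prove by induction on $n$. For the base case $n=2$, a slice decomposition of a $2$-tensor is the same thing as a decomposition into rank-one matrices, so $\slicerank(F)$ equals the rank of the matrix $\bigl(F(x_1,x_2)\bigr)_{x_1,x_2}$; this matrix is diagonal with the $|A|$ nonzero entries $c_a$ on the diagonal, hence has rank $|A|$.

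For the inductive step, let $n\ge 3$, assume the statement for $n-1$, and suppose for contradiction that $F=\sum_{j=1}^{k}h_j$ with $k<|A|$, each $h_j$ a slice. Relabel so that $h_1,\dots,h_m$ are exactly the slices of the form $h_j(x_1,\dots,x_n)=f_j(x_1)\,g_j(x_2,\dots,x_n)$, while $h_{m+1},\dots,h_k$ each factor through one of $x_2,\dots,x_n$. The idea is to contract away the first coordinate against a carefully chosen vector $v=(v_a)_{a\in A}\in\mathbb{F}^A$, forming
\[
\widetilde F(x_2,\dots,x_n)=\sum_{a\in A}v_a\,F(a,x_2,\dots,x_n)=\sum_{a\in A}v_a c_a\,\delta_a(x_2)\cdots\delta_a(x_n).
\]
I want $v$ to satisfy $\sum_{a\in A}v_a f_j(a)=0$ for $j=1,\dots,m$; this is a system of $m<|A|$ linear conditions, so the solution space $W\subseteq\mathbb{F}^A$ has $\dim W\ge |A|-m$. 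The key linear-algebra fact is that any subspace of $\mathbb{F}^A$ of dimension $t$ contains a vector of support size at least $t$: choose $t$ coordinates onto which the subspace projects isomorphically, and pull back the all-ones vector of $\mathbb{F}^t$. Choosing such a $v\in W$ and setting $A'=\{a\in A:v_a\ne 0\}$, we get $|A'|\ge|A|-m\ge 1$.

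Now I estimate $\slicerank(\widetilde F)$ in two ways. Applying the contraction to the given decomposition annihilates $h_1,\dots,h_m$ by the choice of $v$, and for $j>m$ it leaves the factored-out coordinate $x_{i_j}\in\{x_2,\dots,x_n\}$ untouched, so $\sum_{a\in A}v_a h_j(a,\cdot)$ is still a slice in the variables $x_2,\dots,x_n$; hence $\slicerank(\widetilde F)\le k-m$. On the other hand $\widetilde F$ is a diagonal $(n-1)$-tensor with support $A'$ and nonzero coefficients $v_a c_a$, so by the inductive hypothesis $\slicerank(\widetilde F)=|A'|\ge|A|-m$. Combining the two bounds gives $|A|-m\le k-m$, i.e.\ $|A|\le k$, contradicting $k<|A|$. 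Hence $\slicerank(F)\ge|A|$, and together with the upper bound, $\slicerank(F)=|A|$.

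The step I expect to be the main obstacle is the choice of $v$: taking merely \emph{some} nonzero vector annihilating $f_1,\dots,f_m$ would only yield $\slicerank(\widetilde F)\ge 1$, which is too weak to close the induction, so one genuinely needs the support bound $|A'|\ge|A|-m$. A subtler but routine point is the bookkeeping in the inductive step — checking that contraction against $v$ maps slices to slices, so the slice count drops by exactly $m$, and that $\widetilde F$ remains a diagonal tensor with all coefficients nonzero.
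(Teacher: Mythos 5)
Your proof is correct. The paper does not spell out an argument for this lemma---it simply cites Tao's Lemma 1 and Lemma 4.7 of Blasiak--Church--Cohn--Grochow--Naslund--Sawin--Umans---and your induction-plus-contraction argument, including the key observation that a $t$-dimensional subspace of $\mathbb{F}^{A}$ contains a vector of support size at least $t$, is essentially the standard proof given in those references.
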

\begin{proof}
See \cite[Lemma 1]{TaosBlogCapsets} or \cite[Lemma 4.7]{BlasiakChurchCohnGrochowNaslundSawinUmans2016MatrixMultiplication}.
\end{proof}

\section{Main Result}

We will deduce Theorem \ref{thm:monochromatic_triangles} the following
proposition:
\begin{prop}
\label{prop:equil_triangle_prop}For $k\leq\frac{n}{2}$, let $S\subset\{0,1\}^{n}$
be the set of elements with exactly $k$ ones, and let $p$ be the
smallest odd prime such that $p>\frac{k}{4}$. Suppose that $A\subset S$
does not contain $x,y,z$ with 
\[
\|x-y\|_{2}=\|y-z\|_{2}=\|z-x\|_{2}=\sqrt{2p}.
\]
Let $\epsilon_{0}=n^{0.525}$ denote an error term. Then for sufficiently
large $n$ 
\[
|A|\leq3\cdot\min_{0<t<1}\frac{(1+t)^{n}}{t^{\frac{n}{3}+\frac{k}{6}+\epsilon_{0}}}.
\]
\end{prop}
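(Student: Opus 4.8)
The plan is to set up a diagonal tensor whose slice rank can be bounded in two ways: from above by the triangle-free hypothesis, and from below by Lemma~\ref{lem:critical_lemma}. First I would observe that for $x,y \in S$ (both of Hamming weight $k$), $\|x-y\|_2^2 = 2(k - \langle x,y\rangle)$, so the forbidden distance $\sqrt{2p}$ corresponds exactly to $\langle x,y\rangle = k-p$. Thus $A$ contains no triple $x,y,z$ with all three pairwise inner products equal to $k-p$. The goal is to encode "all three pairwise inner products avoid the value $k-p$ modulo $p$'' — here the choice $p > k/4$ is what makes the residue classes behave: since $0 \le \langle x,y\rangle \le k < 4p$, knowing $\langle x,y\rangle \bmod p$ pins down the inner product enough to separate the diagonal case. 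I would define, over $\mathbb{F}_p$,
\[
F(x,y,z) = \prod_{j=1}^{m}\bigl(\langle x,y\rangle - (k-p)\bigr)\cdot(\text{similar factors for }\langle y,z\rangle,\ \langle z,x\rangle),
\]
or more precisely a polynomial in the three inner products that vanishes unless all three are $\equiv k-p$, and is nonzero on the diagonal $x=y=z$ (where each inner product equals $k$, and $k \not\equiv k-p \equiv k \pmod p$ fails — wait, $k - (k-p) = p \equiv 0$, so one must instead build the polynomial to detect the residue $k-p \bmod p$ while using $p \nmid k$, which holds since $p > k/4 > $ any prime factor of $k$ only if... actually one needs $p \nmid (p) $, trivially false; the correct device is to take the inner product reduced mod $p$ and use that $k - p \equiv k$, so "avoiding $k-p$'' is the same constraint as one residue class, and the diagonal has inner product $k \not\equiv$ that class iff $p \nmid p$, which is wrong — so the real construction detects $\langle x,y \rangle \in \{k-p\}$ as a set of size one among the $\le 4$ possible values congruent to it mod $p$, handled by a degree-$\le 3$ interpolation polynomial).

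So the second step is: for each pair, let $P(u)$ be the polynomial over $\mathbb{F}_p$ of degree at most $3$ that is $0$ at $u \equiv k-p \pmod p$ and $1$ at $u \equiv k \pmod p$ (these are distinct residues since $p \nmid p$ is false — rather, one uses that among $u \in \{0,\dots,k\}$ the value $u = k$ and the value $u = k-p$ are genuinely different integers, and reduces everything to residues where $p > k/4$ guarantees at most $\lceil k/p \rceil \le 4$ lifts, so an interpolating polynomial of degree $\le 3$ in $u \bmod p$ does the job). Then set
\[
F(x,y,z) = 1 - \bigl(1 - P(\langle x,y\rangle)\bigr)\bigl(1 - P(\langle y,z\rangle)\bigr)\bigl(1 - P(\langle z,x\rangle)\bigr)
\]
so that $F(x,y,z) = 0$ exactly when all three inner products lie in the forbidden class, and $F(x,x,x) = 1$ for all $x \in A$. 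By the triangle-free hypothesis $F$ restricted to $A \times A \times A$ is therefore a diagonal tensor with support of size $|A|$, giving $\slicerank(F|_{A^3}) = |A|$ by Lemma~\ref{lem:critical_lemma}.

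The third step — which I expect to be the main obstacle — is the upper bound on the slice rank. Expanding $F$ as a polynomial in the coordinates $x_1,\dots,x_n,y_1,\dots,z_n$, each factor $P(\langle x,y\rangle)$ has degree $\le 3$ in the combined variables split evenly between the $x$- and $y$-blocks, so the product of three such factors has total degree $\le 9$, with at most $3$ in any single block after symmetrizing; every monomial can be "sliced'' by assigning it to whichever of the three variable-blocks carries the fewest of its variables, which is at most $3$ coordinates. The slice rank is then bounded by the number of monomials in $n$ binary variables of degree at most $\frac{n}{3} + O(1)$, refined using a weighting/Chernoff argument: introduce the free parameter $t \in (0,1)$, bound the count of low-degree monomials by $\frac{(1+t)^n}{t^{d}}$ with $d = \frac{n}{3} + \frac{k}{6} + \epsilon_0$, where the $\frac{k}{6}$ correction comes from the fact that on $S$ the monomials $x_i$ are not independent (weight exactly $k$), and $\epsilon_0 = n^{0.525}$ absorbs the deviation in the relevant hypergeometric/binomial tail (this is where the prime gap bound $p = O(\log^{O(1)}$ or rather the density of primes near $k/4$, together with $p \le k$, feeds in). Combining $|A| = \slicerank(F|_{A^3}) \le 3 \cdot \frac{(1+t)^n}{t^{d}}$ and minimizing over $t$ gives the claim; the factor $3$ is the number of blocks a monomial can be assigned to. The delicate points are getting the degree split to give exactly $n/3$ (not worse) per block and pinning down the $\frac{k}{6} + \epsilon_0$ term from the weight-$k$ restriction, for which I would compare monomial counts on $\{0,1\}^n$ versus on the slice $S$ via a standard entropy/Stirling estimate.
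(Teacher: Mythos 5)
There is a genuine gap: the tensor you propose cannot be built, and even a repaired version of it would not be diagonal on $A^{3}$. Your construction works only with the pairwise inner products, and you run headlong into the fact that $\langle x,x\rangle=k$ and the forbidden value $\langle x,y\rangle=k-p$ are \emph{congruent mod $p$}; no polynomial over $\mathbb{F}_{p}$ in the residue of $\langle x,y\rangle$ can be $0$ at one and $1$ at the other, so the interpolating $P$ you ask for does not exist (your own parenthetical flags this but never resolves it --- the ``degree $\le 3$ interpolation over the $\le 4$ lifts'' is not a function of the residue). Your $F$ is also inverted: for the slice-rank argument you need a tensor that vanishes at every non-diagonal triple of $A^{3}$, i.e.\ one that can be nonzero \emph{only} on side-$\sqrt{2p}$ equilateral triangles and on the diagonal, whereas your $F$ vanishes precisely on the forbidden triangles and is generically nonzero elsewhere, so $F|_{A^{3}}$ is nowhere near diagonal. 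Finally, even if you fix both issues by building a pairwise factor that is nonzero exactly when $\tfrac12\|x-y\|_{2}^{2}\in\{0,p\}$, the product of the three pairwise factors is still not diagonal on $A^{3}$: a degenerate triple such as $x=y\neq z$ with $\|x-z\|_{2}=\sqrt{2p}$ passes all three pairwise tests, is not forbidden by the hypothesis on $A$, and is not diagonal.

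The paper's construction is designed exactly to evade these obstacles, and this is the idea missing from your proposal: it uses the genuinely three-variable factor $F(x,y,z)=\prod_{i=1}^{n}(x_{i}+y_{i}+z_{i}-1)$ over $\mathbb{F}_{p}$, whose nonvanishing forces \emph{all three} pairwise distances to be equal and forces $\tfrac12\|x-y\|_{2}^{2}\le\tfrac{k}{2}<2p$; only then does a single Fermat-type factor $G(x,y)=1-\bigl(\tfrac12\|x-y\|_{2}^{2}\bigr)^{p-1}$ suffice, since under $F\neq0$ the ambiguity ``distance $0$ or distance $\sqrt{2p}$'' is harmless (distance $0$ plus equal side lengths means $x=y=z$). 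This is why the mod-$p$ coincidence you stumbled on never needs to be resolved. Your bookkeeping for the final bound is also off: the exponent $\tfrac{n}{3}+\tfrac{k}{6}+\epsilon_{0}$ comes from the total degree $n+2p<n+\tfrac{k}{2}+\epsilon_{0}$ of $H=F\cdot G$ divided by $3$ (slice off the lowest-degree block, then bound $\sum_{j\le r}\binom{n}{j}\le t^{-r}(1+t)^{n}$), with $\epsilon_{0}=n^{0.525}$ coming from the Baker--Harman--Pintz prime-gap bound guaranteeing $p<\tfrac{k}{4}+\epsilon_{0}$ --- not from total degree $\le 9$, not from dependence among monomials on the weight-$k$ slice, and not from a hypergeometric tail.
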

\begin{proof}
Baker, Harman, and Pintz's \cite{BakerHarmanPintz2001DifferenceBetweenConsecutivePrimes}
bounds for the largest prime gap imply that for sufficiently large
$n$ 
\[
p<\frac{k}{4}+\epsilon_{0}.
\]
For $x,y,z\in S$ consider the polynomial 
\[
F\colon S\times S\times S\rightarrow\mathbb{F}_{p}
\]
defined by 

\[
F(x,y,z)=\prod_{i=1}^{n}\left(x_{i}+y_{i}+z_{i}-1\right).
\]
If $x,y,z$ satisfy $F(x,y,z)\neq0$, that is if there is no $i$
such that $x_{i}+y_{i}+z_{i}=1$, then we must have $\|x-y\|_{2}=\|y-z\|_{2}=\|z-x\|_{2}$,
and so they form an equilateral triangle. Furthermore, if $F(x,y,z)\neq0$,
then we can upper bound the distance 
\[
\|x-y\|_{2}^{2}<2p.
\]
To see this, for each $j\in\{0,1,2,3\}$ let $a_{j}=\#\left\{ i:\ x_{i}+y_{i}+z_{i}=j\right\} $,
and note that $a_{1}=0$ since $F(x,y,z)\neq0$. Since there are $n$
coordinates, and $3k$ total entries equal to $1$, we have that 
\[
a_{0}+a_{2}+a_{3}=n\ \ \text{ and }\ \ 2\cdot a_{2}+3\cdot a_{3}=3k.
\]
Subtracting 3 times the first equation from the second, we obtain
\[
a_{2}=3n-3k-3a_{0}.
\]
The only coordinates that contribute to the distance are counted by
$a_{2}$, and so 
\[
\|y-z\|_{2}^{2}+\|z-x\|_{2}^{2}+\|x-y\|_{2}^{2}=2a_{2}.
\]
Hence 
\[
\|x-y\|_{2}^{2}=2n-2k-2a_{0}.
\]
The smallest $a_{0}$ can be is if $a_{3}=0$ and all $3k$ ones are
used by coordinates where the sum is $2$. That is, $a_{0}\geq n-\frac{3k}{2},$
and hence
\begin{equation}
\frac{\|x-y\|_{2}^{2}}{2}\leq\frac{k}{2}<2p.\label{eq:triangle_distance_bound}
\end{equation}

Let $G\colon S\times S\rightarrow\mathbb{F}_{p}$ be given by
\[
G(x,y)=\left(1-\left(\frac{\|x-y\|_{2}^{2}}{2}\right)^{p-1}\right),
\]
and note that $\frac{1}{2}\|x-y\|_{2}^{2}$ will always be an integer
for $x,y\in S$. If if $x\neq y$ are such that $\frac{1}{2}\|x-y\|_{2}^{2}<2p$,
then $G(x,y)\neq0$ if and only if $\frac{1}{2}\|x-y\|_{2}^{2}=p$.
For $x,y,z\in S$ define 
\[
H(x,y,z)\coloneqq F(x,y,z)G(x,y).
\]
This function will be non-zero when $x=y=z$, and will be zero whenever
$x,y,z$ do not form an equilateral triangle with side length $\sqrt{2p}$.
Suppose that $A\subset S$ contains no equilateral triangles of side
length $\sqrt{2p}$. Then $H$ restricted to $A\times A\times A$
will be a diagonal tensor with non-zero diagonal elements, and so
by Lemma \ref{lem:critical_lemma} 
\[
|A|\leq\slicerank(H).
\]
The polynomial $H$ will have degree at most $n+2p<n+\frac{k}{2}+\epsilon_{0}$,
and we may expand it as a linear combination of monomials of the form
\[
\left(x_{1}^{d_{1}}\cdots x_{n}^{d_{n}}\right)\left(y_{1}^{e_{1}}\cdots x_{n}^{e_{n}}\right)\left(z_{1}^{f_{1}}\cdots z_{n}^{f_{n}}\right)
\]
where $e_{i},d_{i},f_{i}\in\{0,1\}$ for each $i$, and 
\[
\left(\sum_{i=1}^{n}d_{i}\right)+\left(\sum_{i=1}^{n}e_{i}\right)+\left(\sum_{i=1}^{n}f_{i}\right)\leq n+\frac{k}{2}+\epsilon_{0}.
\]
For each monomial, one of these sums will be at most $\frac{1}{3}(n+\frac{k}{2}+\epsilon_{0})$,
and hence by always slicing off the lowest degree piece we have
\[
\slicerank(H)\leq3\cdot\#\left\{ v\in\{0,1\}^{n}:\ \sum_{i=1}^{n}v_{i}\leq\frac{n}{3}+\frac{k}{6}+\frac{\epsilon_{0}}{3}\right\} .
\]
For any $0<t<1$, 
\[
\#\left\{ v\in\{0,1\}^{n}:\ \sum_{i=1}^{n}v_{i}\leq r\right\} =\sum_{k\leq r}\binom{n}{k}\leq t^{-r}\sum_{k=0}^{n}\binom{n}{k}t^{k}
\]
since the coefficient $t^{k-r}$ will be greater than $1$ for $k\leq r$.
Taking the minimum over $t$, for $r=\frac{n}{3}+\frac{k}{6}+\frac{\epsilon_{0}}{3}$,
we obtain the stated result.
\end{proof}
\begin{proof}[Proof of Theorem \ref{thm:monochromatic_triangles}]
 Let $S\subset\{0,1\}^{n}$ be the subset of vectors with exactly
$k$ ones, for $k\leq\frac{n}{2}$, and let $A\subset S$ be the largest
subset that does not contain an equilateral triangle of side length
$\sqrt{2p}$. Then we need at least $\frac{|S|}{|A|}$ sets that do
not contain an equilateral triangle of side lengths $\sqrt{2p}$ to
cover $S$. Rescale every point in $\mathbb{R}^{n}$ by a factor of
$\sqrt{2p}$ so that these points are at distance $1$. As $\frac{1}{\sqrt{2p}}S\subset\mathbb{R}^{n}$,
it follows that 
\[
\chi_{\Delta}(\mathbb{R}^{n})\geq\frac{|S|}{|A|},
\]
and by Proposition \ref{prop:equil_triangle_prop} 
\[
\chi_{\Delta}(\mathbb{R}^{n})\geq\frac{1}{3}\binom{n}{k}\max_{0<t<1}\frac{t^{\frac{n}{3}+\frac{k}{6}+\epsilon_{0}}}{(1+t)^{n}}.
\]
Since this bound holds for any $0\le k\leq\frac{n}{2}$, we may take
the maximum and write
\[
\chi_{\Delta}(\mathbb{R}^{n})\geq\frac{1}{3}\max_{0<t<1}\left[\left(\frac{t^{\frac{1}{3}+\frac{\epsilon_{0}}{n}}}{1+t}\right)^{n}\max_{0\leq k\leq\frac{n}{2}}\binom{n}{k}t^{\frac{k}{6}}\right].
\]
Expanding $(1+x)^{n}$, we have that for any $0<x<1$ 
\[
\frac{(1+x)^{n}}{n+1}<\max_{0\leq k\leq\frac{n}{2}}\binom{n}{k}x^{k}<(1+x)^{n},
\]
and hence 
\[
\max_{0\leq k\leq\frac{n}{2}}\binom{n}{k}t^{\frac{k}{6}}>\frac{1}{n+1}\left(1+t^{\frac{1}{6}}\right)^{n}.
\]
We must have $t^{\frac{1}{3}}>\frac{1}{2}$, since otherwise the value of the function we are maximizing will be less than $1$. Hence $t$ will be bounded away from $0$, which implies that $t^{\frac{\epsilon_{0}}{n}}=1+o(1)$.
Simplifying the result, we obtain 
\[
\chi_{\Delta}(\mathbb{R}^{n})>\left(\max_{0<t<1}\frac{t^{\frac{1}{3}}\left(1+t^{\frac{1}{6}}\right)}{1+t}+o(1)\right)^{n}\]
and the desired bound follows by computing the maximum.
\end{proof}

\specialsection*{Acknowledgements}

I would like to thank Yufei Zhao for several helpful conversations.
This work was partially supported by the NSERC PGS-D scholarship,
and by Ben Green's ERC Starting Grant 279438, Approximate Algebraic
Structure and Applications.

\bibliographystyle{siam}

\end{document}